\newtheorem{theorem}{Theorem}[section]
\newtheorem{assertion}[theorem]{Assertion}
\newtheorem{definition}[theorem]{Definition}
\newtheorem{lemma}[theorem]{Lemma}
\newenvironment{proof}[1][Proof]{\noindent\textbf{#1.} }{\ \rule{0.5em}{0.5em}}
\date{}
\begin{document}
\title{Examples of flag-wise positively curved spaces}

\author{Ming Xu\thanks{College of Mathematics, Tianjin Normal University, Tianjin 300387, P. R. China. Email:mgmgmgxu@163.com. Supported by NSFC (no. 11271216), Science and Technology Development Fund for Universities and Colleges in Tianjin (no. 20141005), and Doctor fund of Tianjin Normal University (no. 52XB1305).}}

\maketitle

\begin{abstract}
A Finsler space $(M,F)$ is called flag-wise positively curved, if for any
$x\in M$ and any tangent plane $\mathbf{P}\subset T_xM$, we can find
a nonzero vector $y\in \mathbf{P}$, such that the flag curvature $K^F(x,y,
\mathbf{P})>0$. Though compact positively curved spaces are very rare in both Riemannian and Finsler geometry, flag-wise positively curved metrics should be easy to be found. A generic Finslerian perturbation for a non-negatively curved homogeneous metric may have a big chance to produce flag-wise positively curved metrics. This observation leads our discovery of these metrics on many compact manifolds. First we prove any Lie group $G$ such
that its Lie algebra $\mathfrak{g}$ is compact non-Abelian and
$\dim\mathfrak{c}(\mathfrak{g})\leq 1$ admits
flag-wise positively curved left invariant Finsler metrics. Similar techniques can be applied to our exploration for more general compact coset spaces. We will prove, whenever $G/H$ is a compact simply connected coset space, $G/H$ and $S^1\times G/H$ admit flag-wise positively curved Finsler metrics. This provides abundant examples for this type of metrics, which are not homogeneous in general.

\textbf{Mathematics Subject Classification (2000)}: 22E46, 53C30.

\textbf{Key words}: Finsler metric; flag curvature; flag-wise positively curved condition; left invariant metric; Killing navigation.
\end{abstract}

\section{Introduction}
A {\it Finsler metric} on a smooth manifold $M$ is a continuous function $F:TM\rightarrow [0,+\infty)$ satisfying the following conditions:
\begin{description}
\item{\rm (1)} $F$ is a positive smooth function on the slit tangent bundle
$TM\backslash 0$;
\item{\rm (2)} $F(x,\lambda y)=\lambda F(x,y)$ for any $x\in M$, $y\in T_xM$, and $\lambda\geq 0$;
\item{\rm (3)} For any {\it standard local coordinates} $x=(x^i)$ and $y=y^i\partial_{x^i}$ on $TM$, the Hessian matrix
    $$(g^F_{ij}(x,y))=(\frac12[F^2(x,y)]_{y^iy^j})$$
    is positive definite for any nonzero $y\in T_xM$, i.e. it defines
    an inner product
    $$\langle u,v\rangle_y^F=\frac{1}{2}\frac{d^2}{dsdt}F^2(y+su+tv)|_{s=t=0}
    =g^F_{ij}(x,y)u^iv^j$$
    for any $u=u^i\partial_{x^i}$ and $v=v^j\partial_{x^j}$ in $T_xM$.
\end{description}
We call $(M,F)$ a {\it Finsler space} or a {\it Finsler manifold}. The restriction of the
Finsler metric to a tangent space is called a {\it Minkowski norm}. Minkowski norm can also be defined on any real vector space by similar conditions as (1)-(3), see \cite{BCS00} and \cite{CS05}.

In Finsler geometry, flag curvature is the natural generalization for  sectional curvature in Riemannian geometry. But the flag curvature $K^F(x,y,\mathbf{P})$ is a much more localized geometric quantity in the sense that it depends on tangent plane
$\mathbf{P}\in T_xM$ as well as the nonzero base vector $y\in\mathbf{P}$, see Section 2 below. This inspires us to define the following generalization for
the positively curved condition in Finsler geometry \cite{XD16}.
\begin{definition} Let $(M,F)$ be a Finsler space. We say a tangent plane $\mathbf{P}\subset T_xM$ satisfies the (FP) condition if
there exists a nonzero
vector $y\in T_xM$ such that the flag curvature $K^F(x,y,\mathbf{P})>0$. We
say $(M,F)$ satisfies the (FP) condition or it is flag-wise positively curved
if all its tangent planes satisfy the (FP) condition.
\end{definition}

In \cite{XD16}, we have found many compact coset spaces which admit non-negatively and flag-wise positively curved homogeneous Finsler metrics,
but no positively curved homogeneous Finsler metrics.
If concerning the flag-wise positively curved condition alone, we will have much more chance finding new metrics of this type. For example we can start with a canonical homogeneous metric of non-negative curvature, for example,
bi-invariant metrics on quasi-compact Lie groups (i.e. its Lie algebra is compact), and normal homogeneous metrics \cite{Be}. Then a generic Finslerian perturbation may produce a flag-wise positively curved Finsler spaces. 

In this paper, we will justify this observation. First we will prove the following main theorem, which gives a positive answer to Problem 4.4 in \cite{XD16}.
\begin{theorem}\label{mainthm-1}
Any Lie group $G$ such that $\mathrm{Lie}(G)=\mathfrak{g}$ is a compact
non-Abelian Lie algebra with $\dim\mathfrak{c}(\mathfrak{g})\leq 1$ admits a flag-wise positively curved left invariant Finsler metric.
\end{theorem}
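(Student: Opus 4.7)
The plan is to construct $F$ as a Killing navigation of a bi-invariant Riemannian metric on $G$. Fix once and for all a bi-invariant $g_{0}$; its sectional curvature on a $g_{0}$-orthonormal 2-plane $\mathbf{P}=\mathrm{span}(X,Y)\subset\mathfrak{g}$ equals $\tfrac14\|[X,Y]\|_{g_{0}}^{2}$, so $g_{0}$ is non-negatively curved and its curvature vanishes precisely on the abelian 2-planes. Choose a left-invariant vector field $V$ on $G$ of sufficiently small $g_{0}$-norm; being left-invariant under a bi-invariant metric, $V$ is automatically $g_{0}$-Killing, and the navigation data $(g_{0},V)$ determine a left-invariant Finsler metric $F$. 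By left-invariance it suffices to verify the (FP) condition at every 2-plane $\mathbf{P}\subset T_{e}G=\mathfrak{g}$.

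For a non-abelian $\mathbf{P}$, $K^{g_{0}}(e,\mathbf{P})>0$ strictly, and by continuity of flag curvature in both the metric and the flag pole, shrinking $V$ if necessary yields $K^{F}(e,y,\mathbf{P})>0$ for every $y\in\mathbf{P}$. The real work is with the abelian 2-planes. Here I would apply the Killing-navigation flag-curvature formula, which identifies $K^{F}(e,y,\mathbf{P})$ with the sectional curvature of $g_{0}$ on an adjusted plane (shifted through $V$) together with a correction built from $\mathrm{ad}(V)$ acting on $\mathbf{P}$. On an abelian $\mathbf{P}$ the sectional term vanishes, so with $V=\epsilon V_{0}$ the leading $\epsilon^{2}$-contribution is an explicit polynomial in $V_{0}$, in $y\in\mathbf{P}$, and in the structure constants of $\mathfrak{g}$. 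The task reduces to selecting one universal $V_{0}\in\mathfrak{g}$ such that, for every abelian 2-plane $\mathbf{P}$, some $y\in\mathbf{P}$ makes this polynomial strictly positive.

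Finding such a $V_{0}$ is where the hypothesis $\dim\mathfrak{c}(\mathfrak{g})\leq 1$ is exploited. Writing $\mathfrak{g}=\mathfrak{c}(\mathfrak{g})\oplus\mathfrak{g}'$ with $\mathfrak{g}'$ semisimple and nonzero by non-Abelianness, fix a maximal torus $\mathfrak{t}\subset\mathfrak{g}$; every abelian 2-plane is $\mathrm{Ad}(G)$-conjugate into $\mathfrak{t}$, so I would first choose a component of $V_{0}$ in the $g_{0}$-complement of $\mathfrak{t}$ that generates nontrivial brackets with every 2-plane of $\mathfrak{t}$ (exploiting the root-space decomposition of $\mathfrak{g}'$), and then verify by a dimension/genericity argument on the Grassmannian of abelian 2-planes that such a $V_{0}$ exists. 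Were $\dim\mathfrak{c}(\mathfrak{g})\geq 2$, the center itself would provide an abelian 2-plane on which every left-invariant Killing field acts trivially and no correction could produce positivity, so the hypothesis is natural and essentially sharp.

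I expect the main obstacle to be the simultaneous control of two demands on $V_{0}$: first, isolating a sign-detectable leading term in the navigation flag-curvature formula, likely of the form $\|[V_{0},y]\|_{g_{0}}^{2}$ up to permissible lower-order errors; and second, exhibiting a single $V_{0}$ that works simultaneously for every abelian 2-plane in every Cartan subalgebra, while remaining small enough to preserve the continuity argument on non-abelian planes. The root-space decomposition of the semisimple factor $\mathfrak{g}'$ together with the $\mathrm{Ad}(G)$-orbit structure on $\mathfrak{t}$ should be the main technical input.
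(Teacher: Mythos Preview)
Your proposal has a genuine gap: a \emph{single} Killing navigation vector $V_{0}$ cannot work once $\operatorname{rk}\mathfrak{g}\geq 2$, which is the only nontrivial case. Every element of a compact Lie algebra lies in some Cartan subalgebra $\mathfrak{t}_{0}$, so $[V_{0},\mathfrak{t}_{0}]=0$. If $\dim\mathfrak{t}_{0}\geq 2$, pick any $2$-plane $\mathbf{P}\subset\mathfrak{t}_{0}$. Running the navigation identity backwards, for any $y'\in\mathbf{P}$ set $y=y'-\epsilon\tilde F(y')V_{0}\in\mathfrak{t}_{0}$ and let $w\in\mathbf{P}$ be $g_{0}$-orthogonal to $y$; then
\[
K^{\tilde F_{\epsilon}}(e,y',\mathbf{P})=K^{g_{0}}(e,y\wedge w)=\tfrac14\|[y,w]\|_{g_{0}}^{2}=0,
\]
since $y,w\in\mathfrak{t}_{0}$. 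Thus the navigated metric is \emph{flat} on $\mathbf{P}$ for every flag pole, and your hoped-for leading term $\|[V_{0},y]\|_{g_{0}}^{2}$ vanishes identically there. No genericity argument on root spaces can salvage this, because the obstruction is the Cartan subalgebra through $V_{0}$ itself, not the fixed $\mathfrak{t}$ you started from. (Your continuity argument on non-abelian planes is also not uniform, since they accumulate on abelian ones, but this is secondary.)

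The paper's proof resolves exactly this obstruction by abandoning the single-vector approach. It covers the bi-invariant unit sphere $\mathcal{S}\subset\mathfrak{g}$ (minus the two central directions $\pm u_{0}$) by finitely many small caps $\mathcal{U}_{u_{i},r_{i}}$, each associated with a \emph{different} Cartan subalgebra $\mathfrak{t}_{i}$ disjoint from that cap and a navigation vector $v_{i}\in\mathfrak{t}_{i}$. Lemma~3.1 then guarantees that the corresponding navigated metric $\tilde F_{i;\epsilon}$ has positive flag curvature on every plane meeting that cap. The final metric is a partition-of-unity combination $F_{\epsilon}=\sum_{i}\mu_{i}\tilde F_{i;\epsilon}$ on $\mathfrak{g}$, and the crucial geometric lemma (Lemma~3.2) is that the transition zones, together with the two polar caps around $\pm u_{0}$, are thin enough that \emph{every} great circle in $\mathcal{S}$ still hits a region where $F_{\epsilon}$ coincides with a single $\tilde F_{i;\epsilon}$. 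That is what delivers (FP) globally. The gluing step is the essential new idea you are missing.
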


As in Section 4 of \cite{XD16}, where we prove Theorem \ref{mainthm-1} when
$\mathrm{rk}\mathfrak{g}=2$,
the construction for the metric is based on the Killing navigation
technique, but we need a more complicated gluing process here.

With the similar method, we can even prove
\begin{theorem}\label{mainthm-2}
For any compact simply connected coset space $G/H$,
we can find flag-wise positively curved Finsler metrics on
$G/H$ and $S^1\times G/H$.
\end{theorem}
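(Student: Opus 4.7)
The plan is to build on the Killing navigation strategy of Theorem~\ref{mainthm-1}, using a normal homogeneous Riemannian metric on the coset as the non-negatively curved baseline. I would first reduce to the case where $G$ is compact and connected: since $G/H$ is compact and simply connected, one may replace $G$ by a compact form without changing the underlying coset space. Fix an $\mathrm{Ad}(G)$-invariant inner product on $\mathfrak{g}$ with reductive decomposition $\mathfrak{g}=\mathfrak{h}\oplus\mathfrak{m}$, and let $h$ be the induced normal homogeneous metric; its zero-sectional-curvature tangent planes at $xH$ correspond precisely to $2$-planes in $\mathfrak{m}$ whose representatives commute in $\mathfrak{g}$ modulo $\mathfrak{h}$.

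For the case $S^1\times G/H$, equip $(G\times S^1)/H$ with the analogous normal metric. The Killing field $Z$ generating the $S^1$-factor is central and nowhere vanishing, so for sufficiently small $\epsilon>0$ the Zermelo navigation of $h$ by $\epsilon Z$ produces a Randers-type Finsler metric whose flag curvature is positive on every flag whose plane does not contain $Z$. Flags whose plane does contain $Z$ project to a single line in $T_{xH}(G/H)$; these are handled either by a secondary Killing navigation along a generic element of $\mathfrak{g}$, or by a direct application of the flag-curvature formula under Killing navigation with the flagpole chosen transverse to $Z$, where positivity follows from the bi-invariant geometry on the $G$-factor.

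For $G/H$ itself, no globally nowhere-vanishing Killing field is available, so a gluing is required, as the paper hints after Theorem~\ref{mainthm-1}. At each pair $(xH,\mathbf{P})$ in the Grassmannian bundle of $2$-planes, for each $h$-flat $\mathbf{P}$ there exists some $V\in\mathfrak{g}$ whose induced Killing field on $G/H$ has nonzero component at $xH$ transverse to $\mathbf{P}$, and this property persists on an open neighbourhood in both the base and the fibre. Covering the compact flag bundle by finitely many such neighbourhoods $U_1,\dots,U_N$ with associated Killing fields $V_1,\dots,V_N$, and taking a smooth partition of unity $\rho_i$ on $G/H$ subordinate to the projections of the $U_i$, one defines a Finsler metric on $G/H$ through the position-dependent Zermelo navigation by the vector field $\sum_i\rho_i\epsilon_i V_i$. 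For small enough $\epsilon_i$ this remains strongly convex.

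The main obstacle is verifying flag-wise positivity for the glued metric, since $\sum_i\rho_i\epsilon_i V_i$ is not itself Killing for $h$: the clean Killing-navigation flag-curvature formula applies only to each $V_i$ individually, so one must control the lower-order error terms arising from the derivatives of the cut-off functions $\rho_i$. Compactness of the flag bundle of $G/H$ should yield a uniform estimate in which, at every point and every flag, the positive curvature contribution of some locally dominant $V_i$ outweighs the derivative error terms, provided the $\epsilon_i$ are taken small enough. Once this uniform estimate is established, the construction produces the desired flag-wise positively curved Finsler metric on $G/H$.
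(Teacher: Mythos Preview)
Your proposal has two genuine gaps, one for each factor of the construction.

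For $S^1\times G/H$, navigating by the central field $Z$ coming from the $S^1$-factor does not do what you claim. Under Theorem~\ref{killingnavigationthm} the flag curvature of the navigated metric at $(x,\tilde y,\mathbf P)$ equals the \emph{sectional} curvature of the original normal metric at a plane obtained by shifting the flagpole in the $Z$-direction only. Since the $S^1$-factor is flat, this shift does not change the $\mathfrak m$-components of the spanning vectors, and hence a plane spanned by $u=(t,u_1)$ and $v=(t',v_1)$ with $[u_1,v_1]=0$ remains zero-curvature after navigation by $\epsilon Z$ for every choice of flagpole. So the navigated metric is still not flag-wise positively curved, and you still have the full family of flat $2$-planes in $\mathfrak m$ to deal with. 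The paper instead ignores $Z$ entirely and, for each unit tangent vector $w=(s,w_1)$ with $w_1\neq 0$, chooses a Killing field coming from some $v_1\in\mathfrak m$ with $[w_1,v_1]\neq 0$; the argument of Lemma~\ref{lemma-1} then shows that navigation by this $v_1$ makes every plane through $w$ satisfy the (FP) condition.

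For $G/H$, the more serious issue is where you glue. You take a partition of unity $\rho_i$ on the base $M=G/H$ and navigate by the single vector field $\sum_i\rho_i\epsilon_i V_i$; as you yourself note, this field is not Killing, so Theorem~\ref{killingnavigationthm} does not apply and the error coming from the derivatives of the $\rho_i$ is first-order in $\epsilon$, competing directly with the curvature gain. There is no evident mechanism to make the positive term dominate uniformly. The paper avoids this difficulty by gluing on the \emph{sphere bundle} $\mathcal{S}M$ rather than on $M$: one covers $\mathcal{S}M$ by open sets $\mathcal{U}_j$, builds the individual Killing-navigation metrics $\tilde F_{j;\epsilon}$, and then sets $F_\epsilon=\sum_i\mu_i F_{i;\epsilon}$ where the $\mu_i$ are functions of the \emph{tangent direction} $y$, constant equal to $1$ on disjoint open pieces $\mathcal{V}_i\subset\mathcal{S}M$. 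The point is that on each $\mathcal{V}_i$ the glued metric \emph{coincides exactly} with one Killing-navigation metric, so the flag curvature there is literally $K^{\tilde F_{j;\epsilon}}$, with no error terms at all. A pigeonhole lemma then guarantees that every tangent plane meets some $\mathcal{V}_i$ in an arc of flagpoles. This ``glue the metrics in the $y$-variable, not the vector fields in the $x$-variable'' idea is the step your outline is missing.
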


This theorem provides abundant examples of flag-wise positively curved metrics. Notice most metrics in these examples are not homogeneous.

In Section 2, we will briefly summarize some fundamental knowledge on the flag curvature and the Killing navigation technique. In Section 3, we will prove
Theorem \ref{mainthm-1}. In Section 4, we will prove Theorem \ref{mainthm-2}.

\section{Flag curvature and Killing navigation process}

On a Finsler space $(M,F)$, the Riemann curvature
$R_y^F=R_k^i(y)\partial_{x^i}\otimes dx^k: T_x M\rightarrow T_xM$ can be similarly defined as in
Riemann geometry, either by the structure equation of the Chern connection,
or the Jacobi field equation for the variation of geodesics \cite{Shen}.  Using it,
 the flag curvature can be defined as follows. Let $y\in T_xM$
be a nonzero tangent vector ({\it the flag pole}), $\mathbf{P}$ a tangent plane in $T_xM$
containing $y$ ({\it the flag}), and suppose $\mathbf{P}$ is linearly spanned by $y$ and $v$. Then the flag
curvature of the triple $(x,y,y
\wedge v)$ or $(x,y,\mathbf{P})$ is defined as
\begin{equation*}
K^F(x,y,y\wedge v)=
\frac{\langle R_y v,v\rangle^F_y}{\langle y,y\rangle^F_y\langle v,v\rangle^F_y
-(\langle y,v\rangle^F_y)^2}.
\end{equation*}

In fact, the flag curvature $K^F(x,y,y\wedge v)$ is irrelevant to the choice of $v$, so we also denote it as $K^F(x,y,\mathbf{P})$.
When $F$ is a Riemannian metric, it is just the sectional curvature and   irrelevant to the choice of $y$.

The {\it navigation process} is an important technique in studying Randers spaces and flag curvature \cite{BRS04}. Let $V$ be a vector field on the Finsler space $(M,F)$ with
$F(V(x))<1$ for any $x\in M$. Given any $y\in T_xM$, denote
$\tilde{y}=y+F(x,y)V(x)$. Then $\tilde{F}(x,\tilde{y})=F(x,y)$ defines
a new Finsler metric on $M$. We call it the metric defined by the navigation process, or by the {\it navigation
datum} $(F,V)$. When $V$ is a Killing vector field of $(M,F)$, i.e.,
$L_VF=0$, we call this a {\it Killing navigation process}, and $(F,V)$ a {\it Killing navigation datum}. Killing navigation is related to the flag curvature by the following theorem.
\begin{theorem}\label{killingnavigationthm}
Let $\tilde{F}$ be the metric defined by the Killing navigation datum
$(F,V)$ on the smooth manifold $M$ with $\dim M>1$. Then for any $x\in M$, and any nonzero vectors $v$ and $y$ in $T_xM$ such that $\langle v,y\rangle^F_y=0$, we have $K^F(x,y,y\wedge v)=K^{\tilde{F}}(x,\tilde{y}, \tilde{y}\wedge v)$.
\end{theorem}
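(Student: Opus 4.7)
The plan is to use the one-parameter group $\varphi_t$ generated by $V$ as the bridge between the two metrics. Since $V$ is Killing, each $\varphi_t$ is an $F$-isometry; I would first record the classical navigation correspondence: if $\gamma(t)$ is an $F$-geodesic with $F(\dot\gamma) = 1$ and $\dot\gamma(0) = y$ (after rescaling we may assume $F(y) = 1$, since the flag curvature is invariant under positive rescaling of $y$), then $c(t) := \varphi_t(\gamma(t))$ is an $\tilde{F}$-geodesic with $\tilde{F}(\dot c) = 1$ and $\dot c(0) = \dot\gamma(0) + V(\gamma(0)) = y + V(x) = \tilde y$. This is verified via the variational characterization of geodesics, using that $\varphi_t$ preserves $F$-length.

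Next I would transport a Jacobi-field computation of the flag curvature across this correspondence. Choose any $F$-geodesic variation $\gamma_s(t)$ with $\gamma_0 = \gamma$ whose variational field $J(t) := \partial_s\gamma_s(t)|_{s=0}$ satisfies $J(0) = v$. Then $c_s(t) := \varphi_t(\gamma_s(t))$ is an $\tilde{F}$-geodesic variation of $c$, and its variational field $\tilde J(t) = (\varphi_t)_\ast J(t)$ is an $\tilde{F}$-Jacobi field along $c$ with $\tilde J(0) = v$.

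A key pointwise ingredient is the standard navigation identity at $T_xM$: since the indicatrices $I_{\tilde F} = I_F + V(x)$ are parallel translates, their tangent hyperplanes at $y$ and $\tilde y$ coincide and their second fundamental forms are identified, which yields
\begin{equation*}
g^{\tilde F}_{\tilde y}(\tilde y, \tilde y) = g^F_y(y, y), \quad g^{\tilde F}_{\tilde y}(\tilde y, v) = 0, \quad g^{\tilde F}_{\tilde y}(v, v) = g^F_y(v, v)
\end{equation*}
whenever $g^F_y(y, v) = 0$. Consequently the denominators of $K^F(x, y, y\wedge v)$ and $K^{\tilde F}(x, \tilde y, \tilde y \wedge v)$ coincide, and the plane $\tilde y \wedge v$ is indeed a genuine flag with pole $\tilde y$ in the $\tilde F$-sense.

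For the numerators, the two Jacobi equations yield $\nabla^{F,2}_{\dot\gamma} J(0) = -R^F_y v$ and $\nabla^{\tilde F,2}_{\dot c}\tilde J(0) = -R^{\tilde F}_{\tilde y} v$, so the claim reduces to comparing these two second covariant derivatives at $t=0$. The isometry $\varphi_t$ carries the $F$-covariant derivative along $\gamma$ to the $F$-covariant derivative along $\varphi_t\cdot\gamma$; the remaining step is to match the latter with the $\tilde{F}$-covariant derivative along $c$. This matching is the main technical obstacle: it requires a careful comparison of the Chern-connection spray coefficients of $F$ and $\tilde F$ at $y$ and $\tilde y$, and it is precisely the Killing equation $L_V F = 0$ that forces the discrepancy to annihilate any $v$ satisfying $g^F_y(y, v) = 0$. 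Once the equality $\langle R^F_y v, v\rangle^F_y = \langle R^{\tilde F}_{\tilde y} v, v\rangle^{\tilde F}_{\tilde y}$ is in hand, dividing through by the common denominator completes the proof.
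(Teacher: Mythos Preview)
The paper does not actually prove this theorem: immediately after the statement it writes ``The proof can be found in \cite{HM07} or \cite{HM15}, where some more general situations are also considered.'' So there is no in-paper argument to compare against; the result is imported wholesale from Huang--Mo.

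Your outline is the standard strategy used in those references: one exploits the flow $\varphi_t$ of the Killing field $V$ to send $F$-unit-speed geodesics $\gamma$ to $\tilde F$-unit-speed geodesics $c(t)=\varphi_t(\gamma(t))$, pushes Jacobi fields across via $(\varphi_t)_\ast$, and then reads off the flag curvature from the Jacobi equation. The indicatrix-translation observation you use to match the denominators and to see that $g^{\tilde F}_{\tilde y}(\tilde y,v)=0$ whenever $g^F_y(y,v)=0$ is also standard. So conceptually you are on the same track as the cited proofs.

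The one place your sketch is genuinely incomplete is exactly where you flag it: the identification of $\nabla^{F,2}_{\dot\gamma}J(0)$ with $\nabla^{\tilde F,2}_{\dot c}\tilde J(0)$ does not follow automatically from $\varphi_t$ being an $F$-isometry, because $\varphi_t$ is \emph{not} an $\tilde F$-isometry. In the Huang--Mo argument this step is handled by a direct computation relating the spray coefficients (equivalently the geodesic coefficients $G^i$ and $\tilde G^i$) of $F$ and $\tilde F$, using the Killing equation to kill the extra terms after pairing with a $g^F_y$-orthogonal $v$. Your sentence ``it is precisely the Killing equation $L_VF=0$ that forces the discrepancy to annihilate any $v$ satisfying $g^F_y(y,v)=0$'' names the right mechanism but does not carry it out; filling that in is the bulk of the work in \cite{HM07,HM15}. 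If you intend this as a proof rather than a plan, that computation must be supplied.
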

The proof can be found in \cite{HM07} or \cite{HM15}, where  some more general situations are also considered.

Notice the condition $\langle w,y\rangle^F_y=0$ in Theorem \ref{killingnavigationthm} is equivalent to
$\langle w,\tilde{y}\rangle^{\tilde{F}}_{\tilde{y}}=0$, and the map from $\tilde{y}$ back to $y$ corresponds to the Killing navigation process which
defines $F$ from $(\tilde{F},-V)$.

\section{The proof of Theorem \ref{mainthm-1}}

First we consider the case that the Abelian factor $\mathfrak{g}_0$ in the non-Abelian compact Lie algebra $\mathfrak{g}=\mathrm{Lie}(G)$ is one dimensional.

We start with a
bi-invariant Riemannian metric $F$ on $G$, determined by the bi-invariant
inner product $\langle\cdot,\cdot\rangle_{\mathrm{bi}}$ and the bi-invariant norm $||\cdot||_{\mathrm{bi}}=\langle\cdot,\cdot\rangle_{\mathrm{bi}}^{1/2}$ on $\mathfrak{g}$.

First we consider a Cartan subalgebra $\mathfrak{t}$ of $\mathfrak{g}$ and $v$ a generic vector in $\mathfrak{t}$, i.e.
$\mathfrak{t}=\mathfrak{c}_\mathfrak{g}(v)$. Then $v$ defines a left invariant Killing vector field $V$ for $(G,F)$. For any sufficiently small
$\epsilon>0$, the navigation datum $(F,\epsilon V)$ defines a Finsler
metric $\tilde{F}_{\epsilon}$. Since both $F$ and $V$ are left invariant,
so is $\tilde{F}_{\epsilon}$. By Theorem \ref{killingnavigationthm},
$(G,\tilde{F}_{\epsilon})$ is non-negatively curved. Then we have
the following analog for Lemma 4.3 in \cite{XD16}, with a similar proof.
\begin{lemma} \label{lemma-1}
(1) Keep all the above assumptions and notations and fix any sufficiently small $\epsilon >0$. If the 2-dimensional subspace
$\mathbf{P}\subset\mathfrak{g}$ does not satisfy the (FP) condition, i.e. $K^{\tilde{F}_{\epsilon}}(e,y,\mathbf{P})\leq 0$ for any nonzero  $y\in\mathbf{P}$, then $\mathbf{P}\subset\mathfrak{t}$.

(2)
When $\mathbf{P}$ is not contained in $\mathfrak{t}$,
$K^{\tilde{F}_{\epsilon}}(e,y,\mathbf{P})> 0$ for any nonzero generic $y\in\mathbf{P}$.
\end{lemma}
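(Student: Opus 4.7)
The plan is to invoke Theorem \ref{killingnavigationthm} to translate the $\tilde{F}_\epsilon$-flag curvature at $y\in\mathbf{P}$ into the $F$-sectional curvature at $y' := y - \epsilon\tilde{F}_\epsilon(y)v$, the image of $y$ under the reverse navigation datum $(\tilde{F}_\epsilon,-\epsilon V)$. Since $F$ is bi-invariant Riemannian on a compact Lie algebra, the classical formula
$$K^F(e,u,u\wedge w) \;=\; \frac{\|[u,w]\|_{\mathrm{bi}}^2}{4\bigl(\|u\|_{\mathrm{bi}}^2\|w\|_{\mathrm{bi}}^2 - \langle u,w\rangle_{\mathrm{bi}}^2\bigr)}$$
gives $K^F\geq 0$, with equality precisely when $[u,w]=0$. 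Because $F$ is Riemannian, the orthogonality hypothesis in Theorem \ref{killingnavigationthm} simplifies to $\langle w,y'\rangle_{\mathrm{bi}}=0$, so I choose $w(y)\in\mathbf{P}\setminus\{0\}$ bi-invariantly orthogonal to $y'$, which is unique up to scalar generically; then
$$K^{\tilde{F}_\epsilon}(e,y,\mathbf{P}) = K^F(e,y',y'\wedge w(y))\geq 0,$$
and this vanishes iff
$$[y,w(y)] \;=\; \epsilon\,\tilde{F}_\epsilon(y)\,[v,w(y)]. \qquad (\ast)$$

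For part (1) I proceed by contrapositive: assuming $\mathbf{P}\not\subset\mathfrak{t}$, I exhibit some $y\in\mathbf{P}$ with $K^{\tilde{F}_\epsilon}(e,y,\mathbf{P})>0$. If $\mathbf{P}$ is non-abelian, pick a bi-invariant orthonormal basis $\{e_1,e_2\}$ of $\mathbf{P}$; then $[e_1,e_2]\neq 0$, so at $y=e_1$ the left side of $(\ast)$ is nonzero when $\epsilon=0$, and a continuity argument in $\epsilon$ keeps $(\ast)$ failing for all sufficiently small $\epsilon>0$. If $\mathbf{P}$ is abelian but not contained in $\mathfrak{t}$, then $[y,w(y)]=0$ identically since $y,w(y)\in\mathbf{P}$ both lie in the abelian plane, and $(\ast)$ reduces to $[v,w(y)]=0$, i.e.\ $w(y)\in\mathfrak{c}_{\mathfrak{g}}(v)=\mathfrak{t}$. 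Since $\dim(\mathbf{P}\cap\mathfrak{t})\leq 1$, I can choose $y$ (for instance a nonzero $y\in\mathbf{P}\cap\mathfrak{t}$ when $\dim=1$, or any $y\in\mathbf{P}$ when $\dim=0$) so that $w(y)$, being a small perturbation of the bi-invariant complement of $y$ in $\mathbf{P}$, lies outside $\mathfrak{t}$; then $[v,w(y)]\neq 0$ and $K>0$ at this $y$.

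Part (2) follows from the same analysis: for $\mathbf{P}\not\subset\mathfrak{t}$, the set of directions $y\in\mathbf{P}\setminus\{0\}$ where $(\ast)$ holds is the zero set of a smooth function on the $1$-dimensional space of directions in $\mathbf{P}$, and it is a proper subset by the argument above, hence closed and nowhere-dense. Its complement is precisely the open dense set of generic $y$ where $K^{\tilde{F}_\epsilon}(e,y,\mathbf{P})>0$.

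The main obstacle is passing to a uniform $\epsilon$ that works simultaneously for all planes $\mathbf{P}\not\subset\mathfrak{t}$: both the non-abelian continuity argument and the abelian perturbation argument a priori produce an $\epsilon_0$ depending on $\mathbf{P}$. A compactness argument on the Grassmannian $\mathrm{Gr}(2,\mathfrak{g})$, together with the smooth dependence of $\tilde{F}_\epsilon$ on $\epsilon$, should yield a single $\epsilon_0>0$ valid for all such planes. This parallels the rank-$2$ estimate in Lemma 4.3 of \cite{XD16}.
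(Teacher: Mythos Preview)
Your overall framework—reverse navigation plus the bi-invariant curvature formula reducing everything to the bracket condition $(\ast)$—is the same as the paper's. But there are two genuine gaps.

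\textbf{Uniformity of $\epsilon$.} You correctly identify the obstacle, but the compactness argument you gesture at does not work. The set $\{\mathbf{P}\in\mathrm{Gr}(2,\mathfrak{g}):\mathbf{P}\not\subset\mathfrak{t}\}$ is open, not compact, so taking an infimum of $\epsilon(\mathbf{P})$ is illegitimate. If you try instead to argue by contradiction with $\epsilon_n\to 0$ and $\mathbf{P}_n\not\subset\mathfrak{t}$ failing (FP), a subsequential limit $\mathbf{P}$ may well land inside $\mathfrak{t}$, giving no contradiction. The paper avoids this entirely by a trick you are missing: it uses the flag poles $+w_1$ and $-w_1$ simultaneously. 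With $w_2\in\mathbf{P}$ chosen orthogonal to $v$, the two resulting bracket identities
\[
[w_1,w_2]-\epsilon F(w'_1)[v,w_2]=0,\qquad -[w_1,w_2]-\epsilon F(w'_2)[v,w_2]=0
\]
add to force $[v,w_2]=0$ and hence $[w_1,w_2]=0$, with no smallness of $\epsilon$ needed beyond $\tilde{F}_\epsilon$ being defined. A second application with a generic pole $w_3=w_1+cw_2$ then gives $[v,w_1]=0$. Your case split (abelian vs.\ non-abelian) and the continuity-in-$\epsilon$ step are precisely what this $\pm$ trick replaces; without it, the argument does not close for a fixed $\epsilon$.

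\textbf{Part (2).} The claim that a proper zero set of a \emph{smooth} function on the circle of directions is nowhere dense is false (smooth functions can vanish on arcs). You need real analyticity of $\tilde{F}_\epsilon$—which holds since $F$ is bi-invariant Riemannian and the navigation is algebraic—so that the flag curvature is real analytic in $y$ and its zero set, once proper, is finite. The paper invokes this explicitly.
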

\begin{proof}
(1) Given any $\mathbf{P}\subset\mathfrak{g}$ as in the lemma, we can find a nonzero vector $w_2\in\mathbf{P}$ with $\langle w_2,v\rangle_{\mathrm{bi}}=0$. Then there exists
a nonzero vector $w_1\in\mathbf{P}$ such that $\langle w_1,w_2\rangle_{\mathrm{bi}}=0$. We can also find nonzero vectors $w'_1$
and $w'_2$ satisfy
$$\tilde{w'_1}=w'_1+\epsilon F(w'_1)v=w_1\mbox{ and }
\tilde{w'_2}=w'_2+\epsilon F(w'_2)v=-w_1.$$
Moreover, we also have
$$\langle w'_1,w_2\rangle_{\mathrm{bi}}=\langle w'_2,w_2\rangle_{\mathrm{bi}}=0.$$
 By Theorem \ref{killingnavigationthm}, we have
\begin{eqnarray}
K^{F}(e,w'_1\wedge w_2)&=&K^{\tilde{F}_{\epsilon}}(e,w_1,\mathbf{P})\leq 0,\label{0000}
\end{eqnarray}
 and
 \begin{eqnarray}
K^F(e,w'_2\wedge w_2)&=&K^{\tilde{F}_{\epsilon}}(e,-w_1,\mathbf{P})\leq 0.\label{0001}
\end{eqnarray}
 Since $(G,F)$ is non-negatively curved, the  equality holds for both (\ref{0000}) and (\ref{0001}), that is,  we have
$$
[w'_1,w_2]=[w_1,w_2]-\epsilon F(w'_1)[v,w_2]=0,$$
 and
$$[w'_2,w_2]=-[w_1,w_2]-\epsilon F(w'_2)[v,w_2]=0.$$
Because $\epsilon$, $F(w'_1)$ and $F(w'_2)$ are all positive,
we conclude that $[w_1,w_2]=[v,w_2]=0$.
So we have $w_2\in\mathfrak{c}_\mathfrak{g}(v)=\mathfrak{t}$.

Now if we change the flag pole to another generic $w_3=w_1+cw_2\in\mathbf{P}$, $c\neq 0$,
then there is a nonzero number $d$ such that the vector $w_4=w_2+dw_1$ satisfies the condition
$\langle w_3,w_4\rangle^{\tilde{F}_{\epsilon}}_{w_3}=0$. Notice $F$ is also defined by the Killing navigation datum $(\tilde{F}_\epsilon,-V)$. Then by Theorem \ref{killingnavigationthm}, for $w'_3=w_3-\epsilon\tilde{F}_\epsilon(w_3)v$, we have
$$K^F(e,w'_3\wedge w_4)=K^{\tilde{F}_{\epsilon}}(e,w_3,w_3\wedge w_4)\leq 0.$$
So we have $K^F(e,w'_3\wedge w_4)=0$, and
$$[w'_3,w_4]=[w_1+cw_2-\epsilon \tilde{F}_\epsilon(w_3)v,w_2+dw_1]=-d\epsilon \tilde{F}_\epsilon(w_3)[v,w_1]=0.$$
Because $d$, $\epsilon$ and $\tilde{F}_\epsilon(w_3)$ are nonzero numbers, we must have $[v,w_1]=0$, i.e. $w_1\in\mathfrak{c}_\mathfrak{g}(v)=\mathfrak{t}$. Thus $\mathbf{P}=\mathrm{span}\{w_1,w_2\}\subset\mathfrak{t}$.

(2)
When $\mathbf{P}$ is not contained in $\mathfrak{t}$, we have just proved
$K^{\tilde{F}_\epsilon}(e,y,\mathbf{P})>0$ for some nonzero vector
$y\in\mathbf{P}$. Notice the left invariant metric $\tilde{F}_{\epsilon}$ is real analytic. So the same statement must be valid for nonzero generic vectors.
\end{proof}

Denote $\mathcal{S}=\{w\in\mathfrak{g},\quad ||w||_{\mathrm{bi}}=1\}\subset\mathfrak{g}$
the bi-invariant unit sphere in $\mathfrak{g}$. For the one-dimensional Abelian factor
$\mathfrak{g}_0$, we have $\mathcal{S}\cap\mathfrak{g}_0=\{\pm u_0\}$.

For any $u\in\mathcal{S}\backslash\{\pm u_0\}$
we can find
a Cartan subalgebra $\mathfrak{t}$ such that $u\notin\mathfrak{t}$. Then there
exists a sufficiently small $r>0$, such that the open neighborhood
$$\mathcal{U}_{u,r}=\{w\in\mathcal{S},\quad ||w-u||_{\mathrm{bi}}< r\}$$ of $u$ in
$\mathcal{S}$ satisfies $\overline{\mathcal{U}_{u,r}}\cap\mathfrak{t}=\emptyset$ (especially, $\pm u_0\notin\overline{\mathcal{U}_{u,r}}$), and $\overline{\mathcal{U}_{u,r}}$ covers less than half of
$\mathcal{S}$. Notice its boundary in $\mathcal{S}$, $\partial{\mathcal{U}_{u,r}}=\{w\in\mathcal{S},\quad||w-u||_{\mathrm{bi}}=r\}$,
is a co-dimension one sphere with a small radius, and it is the intersection between $\mathcal{S}$ and a hyperplane.

Take any generic $v$ from $\mathfrak{t}$, and any sufficiently small $\epsilon>0$, by Lemma \ref{lemma-1}, the metric $\tilde{F}_\epsilon$ defined above satisfies
\begin{assertion}If the 2-dimensional subspace $\mathbf{P}\subset\mathfrak{g}$ satisfies $\mathbf{P}\cap\mathcal{U}_{u,r}\neq\emptyset$, then
for any nonzero generic vector $y\in\mathbf{P}\cap\mathcal{U}_{u,r}$, we have  $K^{\tilde{F}_\epsilon}(e,y,\mathbf{P})>0$.
\end{assertion}
The intersection of a 2-dimensional subspace $\mathbf{P}$ with $\mathcal{S}$
will called a {\it big circle}.

The open neighborhoods $\mathcal{U}_{u,r}$ for all $u\in\mathcal{S}\backslash\{\pm u_0\}$ provide a open covering for $\mathcal{S}\backslash\{\pm u_0\}$. To make a finite open covering for $\mathcal{S}$, we need two more neighborhoods of $\pm u_0$,
$$\mathcal{U}^\pm=\{w\in\mathcal{S},\quad||\pm u_0-w||<r_0\},$$
where $r_0$ is a sufficiently small positive number. We denote this finite open covering for $\mathcal{S}$ as $\{\mathcal{U}^+,\mathcal{U}^-,\mathcal{D}_{u_i,r_i}, 1\leq i\leq m\}$.
In previous argument, each $\mathcal{D}_{u_i,r_i}$ has been associated with
a left invariant Finsler metric $\tilde{F}_{i;\epsilon}$ by the Killing navigation technique.


Denote $\mathcal{S}'$ the union of the following co-dimension one spheres in $\mathcal{S}$,
$\partial\mathcal{U}^+$,
$\partial\mathcal{U}^-$,
and $\partial\mathcal{U}_{u_i,r_i}$ for all $1\leq i\leq m$. For any $\delta>0$, denote
$$\mathcal{S}'_\delta=\{w\in\mathcal{S},\quad|| w-w',w-w'||_{\mathrm{bi}}\leq\delta\mbox{ for some }w'\in\mathcal{S}'\}.$$
The complement of $\mathcal{S}'_\delta\cup\overline{\mathcal{U}^+}\cup\overline{\mathcal{U}^-}$ in $\mathcal{S}$ for a sufficiently small $\delta>0$ is a disjoint finite union of connected open subsets
$\mathcal{V}_i$ of $\mathcal{S}$, $i=1,\ldots,N$. Notice their closures
$\overline{\mathcal{V}_i}$ are disjoint as well.
If $\mathcal{U}_i$ is contained by some $\mathcal{D}_{u_j,r_j}$, we define
the metric $F_{i;\epsilon}$ to be the corresponding $\tilde{F}_{j;\epsilon}$.
When we have multiple choices of $F_{i;\epsilon}$, just choose any one of them.

The key observation here is the following lemma.
\begin{lemma}\label{lemma-2}
Keep all relevant assumptions and notations above. Then for a sufficiently small $\delta>0$,
$\mathcal{S}\backslash(\mathcal{S}'_\delta
\cup\overline{\mathcal{U}^+}\cup\overline{\mathcal{U}^-})$ has
a nonempty intersection with any big circle (or equivalently, any 2-dimensional subspace $\mathbf{P}$).
\end{lemma}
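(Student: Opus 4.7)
My plan is to argue by contradiction, producing a limiting big circle that cannot exist for length reasons on the unit sphere $\mathcal{S}$ endowed with the bi-invariant round metric (under which every big circle is a great circle of length $2\pi$).

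I would first record two structural observations. Each codimension-one sphere making up $\mathcal{S}'$ is the intersection of $\mathcal{S}$ with an affine hyperplane that does not contain the origin; for instance $\partial\mathcal{U}_{u_i,r_i}=\{w\in\mathcal{S}\,:\,\langle w,u_i\rangle_{\mathrm{bi}}=1-r_i^2/2\}$, and similarly for $\partial\mathcal{U}^{\pm}$. Since any $2$-dimensional linear subspace $\mathbf{P}\subset\mathfrak{g}$ passes through the origin, it cannot lie in such a hyperplane; it either misses it or meets it in a single line, whose intersection with $\mathcal{S}$ has at most two points. Thus every big circle meets $\mathcal{S}'$ in at most $2(m+2)$ points. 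Next, since the construction only requires $r_0$ to be sufficiently small, I may shrink $r_0$ so that each cap $\overline{\mathcal{U}^{\pm}}$ has bi-invariant spherical diameter strictly less than $\pi/2$. The intersection of any great circle with one such cap is then a connected arc of spherical length at most that diameter, so the total length any great circle can occupy inside $\overline{\mathcal{U}^+}\cup\overline{\mathcal{U}^-}$ is strictly less than $\pi$, far short of $2\pi$.

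Now I would argue by contradiction: if no $\delta>0$ worked, there would exist $\delta_n\to 0^+$ and big circles $C_n=\mathbf{P}_n\cap\mathcal{S}$ with $C_n\subset\mathcal{S}'_{\delta_n}\cup\overline{\mathcal{U}^+}\cup\overline{\mathcal{U}^-}$. By compactness of the Grassmannian of $2$-planes in $\mathfrak{g}$ I may pass to a subsequence along which $\mathbf{P}_n\to\mathbf{P}_\infty$, and the big circles then converge in Hausdorff distance to $C_\infty:=\mathbf{P}_\infty\cap\mathcal{S}$. Because $\mathcal{S}'$ is closed and $\delta_n\to 0$, a short diagonal compactness argument shows that any Hausdorff limit point of the $C_n$ must lie in $\mathcal{S}'\cup\overline{\mathcal{U}^+}\cup\overline{\mathcal{U}^-}$, so $C_\infty$ is trapped in this union. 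By the first observation, $C_\infty\cap\mathcal{S}'$ consists of at most $2(m+2)$ points, so removing them leaves a subset of $\overline{\mathcal{U}^+}\cup\overline{\mathcal{U}^-}$ whose total length is still $2\pi$, contradicting the second observation.

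The main obstacle I anticipate is arranging that the Hausdorff limit of big circles trapped in progressively thinner $\delta$-neighborhoods really is itself a big circle sitting in $\mathcal{S}'\cup\overline{\mathcal{U}^+}\cup\overline{\mathcal{U}^-}$. This is exactly what the Grassmannian compactness combined with the finite-intersection observation buys; once that is in place, the rest is a direct length comparison on $\mathcal{S}$.
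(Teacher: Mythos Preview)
Your proposal is correct and follows essentially the same contradiction-and-compactness route as the paper: take big circles trapped in $\mathcal{S}'_{\delta_n}\cup\overline{\mathcal{U}^+}\cup\overline{\mathcal{U}^-}$ with $\delta_n\to 0$, pass to a limit big circle in $\mathcal{S}'\cup\overline{\mathcal{U}^+}\cup\overline{\mathcal{U}^-}$, then obtain a contradiction from the fact that $\mathcal{S}'$ meets any big circle in only finitely many points while the caps are too small to hold the rest. Your version is simply more explicit than the paper's---you spell out the affine-hyperplane reason a big circle meets each component of $\mathcal{S}'$ in at most two points, and you quantify ``too small'' via the $\pi/2$ diameter bound---but the argument is the same.
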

\begin{proof}
Assume conversely that $\delta$ indicated by the lemma does not exist, then
for any $n\in\mathbb{N}$, there is a big circle $\mathcal{C}_n=\mathcal{S}\cap\mathbf{P}_n\subset\mathcal{S}'_{1/n}
\cup\overline{\mathcal{U}^+}\cup\overline{\mathcal{U}^-}$. Passing to a suitable subsequence, we can get
a limit big circle $\mathcal{C}=\lim\mathcal{C}_n\subset
\mathcal{S}'\cup\overline{\mathcal{U}^+}\cup\overline{\mathcal{U}^-}$. Because
the big circle $\mathcal{C}$ can not be contained by the two small disks $\overline{\mathcal{U}^+}$ and $\overline{\mathcal{U}^-}$, the part of $\mathcal{C}$ covered by $\mathcal{S}'$ must have a positive length.
But $\mathcal{S}'$ is a finite union of co-dimension 1 spheres
with small radii. Each sphere in $\mathcal{S}'$ can only intersect $\mathcal{C}$ at finite points, i.e. $\mathcal{C}\cap\mathcal{S}'$ is a finite set.
This is a contradiction.
\end{proof}

Fix a $\delta>0$ indicated by Lemma \ref{lemma-2}.
Now we are ready to construct the left invariant metric indicated by Theorem
\ref{mainthm-1}. Let the sequence of non-negative smooth functions $\mu_1,\ldots,\mu_N$ on $\mathcal{S}$ be a partition of unit, i.e.
$\sum_{i=1}^N\mu_i\equiv 1$, and $\mu_i|_{\mathcal{V}_j}\equiv \delta_{ij}$.
The smooth functions $\mu_i$ can also be viewed as positively homogeneous functions of degree 0 on $\mathfrak{g}\backslash\{0\}$.
Denote $F_\epsilon=\sum_{i=1}^N\mu_i F_{i;\epsilon}$. Because $F_0$ coincides with the bi-invariant Riemannian norm on $\mathfrak{g}$, $F_\epsilon$ with sufficiently small $\epsilon>0$ satisfies the positive definite condition for
the Hessian of $F_\epsilon$. Fix a sufficiently small $\epsilon>0$, $F_\epsilon$ defines a Minkowski norm on $\mathfrak{g}$, and translations by $G$ defines a left invariant Finsler metric, still denoted as $F_\epsilon$.

Finally we check the (FP) condition for $F_\epsilon$. We only need to prove it
at $e$. For any tangent plane $\mathbf{P}\subset T_eG=\mathfrak{g}$, by Lemma
\ref{lemma-2}, the big circle $\mathbf{P}\cap\mathcal{S}$ will have nonempty
intersection with some $\mathcal{V}_i\subset\mathcal{U}_{u_j,r_j}$. Notice the associated metric $F_{i;\epsilon}=\tilde{F}_{j;\epsilon}\neq F$ is defined by
a Killing navigation process. Then by Lemma \ref{lemma-1}, for
any nonzero generic vector $y\in\mathcal{U}_i\cap\mathbf{P}$,
$K^{F_\epsilon}(e,y,\mathbf{P})=K^{\tilde{F}_{j;\epsilon}}(e,y,\mathbf{P})>0$.

The above argument proves Theorem \ref{mainthm-1} when the Abelian factor of $\mathfrak{g}$ is one-dimensional. When $\mathfrak{g}$ has no Abelian factor,
we can just assume $\mathcal{U}^\pm=\emptyset$, then the same argument All also proves the theorem in this case.

\section{The proof of Theorem \ref{mainthm-2}}
First we consider the case that
$M=S^1\times G/H$ where $G/H$ is a simply connected compact coset space.

We can assume $G$ is a compact Lie group. Respect to a fixed bi-invariant inner product $\langle\cdot,\cdot\rangle_{\mathrm{bi}}$, we have the orthogonal decomposition $\mathfrak{g}=\mathfrak{h}+\mathfrak{m}$, and a normal homogeneous Riemannian metric $F'$ on $G/H$. Then $F^2=dt^2+F'^2$
defines a normal homogeneous Riemannian metric on $M$.
Denote $\mathcal{S}M$ the sphere bundle over $M$, consisting of all $F$-unit
tangent vectors. There are exactly two smooth sections of the bundle $\mathcal{S}M$, corresponding to the $F$-unit tangent vectors from the $S^1$-directions. Denote their imagines as $\mathcal{E}^+$ and $\mathcal{E}^-$ respectively.

Consider any $x=(x_0,x_1)\in M$ with $x_0\in S^1$ and $x_1\in G/H$.
We can suitable choose the presentation of $G/H$ to make $x_1=eH$.
Then a tangent plane
$\mathbf{P}\subset T_xM=\mathbb{R}\oplus\mathfrak{m}$ has a 0 sectional curvature for the metric $F$ iff $\mathbf{P}$ can be
spanned by $u=(t,u_1)$ and $v=(t',v_1)$ with $[u_1,v_1]=0$.

Let $w=(s,w_1)$ be any tangent vector in $\mathcal{S}M_x\backslash\mathcal{E}^\pm$ with $w_1\neq 0$.
Then we have the following lemma.
\begin{lemma} \label{lemma-0}
Keep all above notations and assumptions. Then there exists
a nonzero vector $v_1\in\mathfrak{m}$, such that $[w_1,v_1]\neq0$, and $\langle w_1,v_1\rangle_{\mathrm{bi}}=0$.
\end{lemma}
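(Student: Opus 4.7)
The plan is a proof by contradiction. Suppose no such $v_1$ exists; that is, every $v_1 \in \mathfrak{m}$ with $\langle w_1, v_1 \rangle_{\mathrm{bi}} = 0$ satisfies $[w_1, v_1] = 0$. Together with $[w_1, w_1] = 0$, and since $(w_1^{\perp} \cap \mathfrak{m}) + \mathbb{R} w_1 = \mathfrak{m}$, this forces $[w_1, \mathfrak{m}] = 0$. The task therefore reduces to showing that no nonzero $w_1 \in \mathfrak{m}$ can centralize all of $\mathfrak{m}$.

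Define $\mathfrak{a} = \{X \in \mathfrak{m} : [X, \mathfrak{m}] = 0\}$. I would first show $\mathfrak{a}$ is an Abelian ideal of $\mathfrak{g}$: Abelianness is trivial, and for the ideal property, given $X \in \mathfrak{a}$ and $Y = Y_\mathfrak{h} + Y_\mathfrak{m} \in \mathfrak{g}$, one computes $[X, Y] = [X, Y_\mathfrak{h}] \in \mathfrak{m}$ because $\mathrm{Ad}(H)$ preserves $\mathfrak{m}$, and a Jacobi identity check confirms that $[X, Y_\mathfrak{h}]$ still annihilates $\mathfrak{m}$, hence lies in $\mathfrak{a}$. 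Since $\mathfrak{g}$ is compact, every Abelian ideal lies in the center $Z(\mathfrak{g})$, so $\mathfrak{a} \subseteq Z(\mathfrak{g}) \cap \mathfrak{m}$, and it is enough to show this intersection is zero.

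The main obstacle, and the only place where simple connectedness of $G/H$ enters, is the intermediate claim $\mathfrak{h} + [\mathfrak{g}, \mathfrak{g}] = \mathfrak{g}$. First, the long exact sequence for $H \to G \to G/H$ combined with $\pi_1(G/H) = 0$ yields $\pi_0(H) = 0$, so $H$ is connected, and therefore so is $H \cdot [G, G]$. I would then consider the fibration $H\cdot [G,G]/H \to G/H \to G/(H\cdot[G,G])$, whose base is the compact connected Abelian Lie group $(G/[G,G])/(\text{image of }H)$, hence a torus. Its long exact sequence, using $\pi_1(G/H) = 0$ and the connectedness of the fiber, gives $\pi_1(\text{base}) = 0$, forcing the base to be a single point; equivalently $G = H \cdot [G,G]$, and at the Lie algebra level $\mathfrak{h} + [\mathfrak{g}, \mathfrak{g}] = \mathfrak{g}$.

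Since $Z(\mathfrak{g}) \perp [\mathfrak{g}, \mathfrak{g}]$ under the bi-invariant inner product, the preceding identity says exactly that the orthogonal projection $\mathfrak{h} \to Z(\mathfrak{g})$ is surjective. Then for any $z \in Z(\mathfrak{g}) \cap \mathfrak{m}$ and any $h \in \mathfrak{h}$, splitting $h = h_{Z(\mathfrak{g})} + h_{[\mathfrak{g},\mathfrak{g}]}$ orthogonally gives $0 = \langle z, h\rangle_{\mathrm{bi}} = \langle z, h_{Z(\mathfrak{g})}\rangle_{\mathrm{bi}}$, so $z$ is orthogonal to all of $Z(\mathfrak{g})$ and hence $z = 0$. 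Therefore $\mathfrak{a} = 0$, contradicting the assumption that $w_1 \in \mathfrak{a}$ is nonzero, and the lemma follows.
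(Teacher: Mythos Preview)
Your proof is correct, but it takes a noticeably longer route than the paper's. Both arguments reduce to showing $[w_1,\mathfrak{m}]\neq 0$ and proceed by contradiction, and both eventually land at $w_1\in\mathfrak{c}(\mathfrak{g})\cap\mathfrak{m}$. The paper reaches this in one line: from $[w_1,\mathfrak{m}]=0$ it notes $[w_1,[w_1,\mathfrak{h}]]\subset[w_1,\mathfrak{m}]=0$, and then skew--symmetry of $\mathrm{ad}(w_1)$ for the bi-invariant inner product forces $[w_1,\mathfrak{h}]=0$ directly. Your detour through the Abelian ideal $\mathfrak{a}$ and the Jacobi identity is valid but unnecessary once one remembers this skew--symmetry trick. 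For the final contradiction, the paper simply observes that a nonzero central $w_1\in\mathfrak{m}$ would give the normal metric on $G/H$ a parallel vector field, hence (by de Rham) a Euclidean factor, which a compact simply connected space cannot have. Your argument instead proves algebraically that $\mathfrak{h}+[\mathfrak{g},\mathfrak{g}]=\mathfrak{g}$ via the fibration over the torus $G/(H\cdot[G,G])$ and then deduces $\mathfrak{c}(\mathfrak{g})\cap\mathfrak{m}=0$ by orthogonality. This is more explicit about exactly where simple connectedness enters, which is a virtue; the paper's geometric one-liner is faster but hides the same content inside the de Rham theorem. One small implicit assumption in your exact-sequence step is that $G$ is connected, but that reduction is standard and harmless here.
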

\begin{proof}
We only need to prove $[w_1,\mathfrak{m}]\neq 0$, then the existence of $v_1$ is obvious.
Assume conversely $[w_1,\mathfrak{m}]=0$, then we also have
$[w_1,[w_1,\mathfrak{h}]]=[w_1,\mathfrak{m}]=0$. This implies $[w_1,\mathfrak{h}]=0$, i.e. $w_1\in\mathfrak{c}(\mathfrak{g})\cap\mathfrak{m}$. The simply connected
$G/H$ must has an Euclidean product factor. This is a contradiction.
\end{proof}

Using $v_1\in\mathfrak{m}$ indicated by lemma \ref{lemma-0}, we can get a Killing vector field $V$ of $(M,F)$ defined by $(0,v_1)$. Because $\langle w_1,v_1\rangle_{\mathrm{bi}}=0$, $V(x)$ is $F$-orthogonal to $w$. For any sufficiently
small $\epsilon>0$, we have a Finsler metric $\tilde{F}_{\epsilon}$ induced by
the navigation datum $(F,\epsilon V)$. By Theorem \ref{killingnavigationthm},
$(M,\tilde{F}_\epsilon)$ is non-negatively curved.

Similar to Lemma \ref{lemma-1}, we have
\begin{lemma}\label{lemma-3}
Keep all relevant assumptions and notations. Fix a sufficiently small $\epsilon>0$.
Then for any tangent plane $\mathbf{P}\subset T_xM$ containing $w$, the flag curvature $K^{\tilde{F}_\epsilon}(x,y,\mathbf{P})>0$ for nonzero
generic vector $y\in\mathbf{P}$.
\end{lemma}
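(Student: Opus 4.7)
The plan is to follow the pattern of Lemma~\ref{lemma-1}, but to exploit the distinguished vector $w \in \mathbf{P}$ provided by the hypothesis so that the commutator argument closes after a single pair of flag poles, without the secondary shift that appeared there. First, I argue by contradiction: if $K^{\tilde{F}_\epsilon}(x, y, \mathbf{P}) \leq 0$ for every nonzero $y \in \mathbf{P}$, then since $(M, \tilde{F}_\epsilon)$ is non-negatively curved by Theorem~\ref{killingnavigationthm} applied to $(F, \epsilon V)$ on the non-negatively curved normal homogeneous $(M, F)$, we must in fact have $K^{\tilde{F}_\epsilon}(x, y, \mathbf{P}) = 0$ identically on $\mathbf{P}$. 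I then set $u_2 = w$ (so that $\pi(u_2) = w_1$ and $\langle u_2, V(x)\rangle_F = \langle w_1, v_1\rangle_{\mathrm{bi}} = 0$, using the orthogonal decomposition built into $F^2 = dt^2 + F'^2$) and pick $u_1 \in \mathbf{P}$ linearly independent from $u_2$ with $\langle u_1, u_2\rangle_F = 0$.

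For sufficiently small $\epsilon$, the fixed-point equations
\begin{equation*}
u'_1 = u_1 - \epsilon F(u'_1) V(x), \qquad u'_2 = -u_1 - \epsilon F(u'_2) V(x)
\end{equation*}
admit unique solutions $u'_i$ satisfying $\tilde{u'_1} = u_1$ and $\tilde{u'_2} = -u_1$ under the navigation datum $(F, \epsilon V)$; the orthogonality $\langle u_2, V(x)\rangle_F = 0$ combined with $\langle u_1, u_2\rangle_F = 0$ yields $\langle u'_i, u_2\rangle_F = 0$, so Theorem~\ref{killingnavigationthm} gives $K^F(x, u'_i \wedge u_2) = K^{\tilde{F}_\epsilon}(x, (-1)^{i+1} u_1, \mathbf{P}) = 0$. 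Writing $\pi \colon T_xM \to \mathfrak{m}$ for the projection, the $\mathfrak{m}$-image of each $F$-plane $u'_i \wedge u_2$ is $\mathrm{span}(\pi(u'_i), w_1)$ with $\pi(u'_i) = (-1)^{i+1}\pi(u_1) - \epsilon F(u'_i) v_1$; since $v_1 \perp_{\mathrm{bi}} w_1$ and $v_1 \neq 0$ imply $v_1 \notin \mathbb{R} w_1$, this image is 2-dimensional for small $\epsilon$. The characterization of zero $F$-sectional curvature recalled at the start of Section~4 then forces this 2D subspace to be abelian, i.e.\ $[\pi(u'_i), w_1] = 0$; expanding and subtracting the two relations cancels $[\pi(u_1), w_1]$ and yields $\epsilon(F(u'_1) + F(u'_2))[v_1, w_1] = 0$, hence $[v_1, w_1] = 0$, contradicting the choice of $v_1$ in Lemma~\ref{lemma-0}.

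Therefore the assumption fails and some flag pole $y_0 \in \mathbf{P}$ satisfies $K^{\tilde{F}_\epsilon}(x, y_0, \mathbf{P}) > 0$. Since $\tilde{F}_\epsilon$ is real-analytic (being Killing-navigated from a real-analytic Riemannian metric by a real-analytic Killing field), the non-negative function $y \mapsto K^{\tilde{F}_\epsilon}(x, y, \mathbf{P})$ on the unit circle of $\mathbf{P}$ is real-analytic and not identically zero, hence positive on a dense open set, giving the \textit{generic} conclusion just as in Lemma~\ref{lemma-1}(2). The main point that needs care, and the analogue of the dimension arguments used in the product characterization of flat planes on $S^1 \times G/H$, is showing that $\pi(u'_i)$ is never a scalar multiple of $w_1$, so that the vanishing of $F$-curvature on the auxiliary planes really forces commutativity in $\mathfrak{m}$; this is precisely where the orthogonality $\langle w_1, v_1\rangle_{\mathrm{bi}} = 0$ supplied by Lemma~\ref{lemma-0} becomes indispensable.
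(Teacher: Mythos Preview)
Your argument is correct and follows essentially the same route as the paper's proof: choose $w'\in\mathbf{P}$ orthogonal to $w$, pull back $\pm w'$ through the navigation to $v',v''$, use Theorem~\ref{killingnavigationthm} and non-negative curvature of $F$ to get $[\pi(v'),w_1]=[\pi(v''),w_1]=0$, and combine to force $[v_1,w_1]=0$; then invoke real-analyticity for the generic statement. Your additional discussion about the $\mathfrak{m}$-projection being two-dimensional is unnecessary (if $\pi(u'_i)\in\mathbb{R}w_1$ the bracket $[\pi(u'_i),w_1]$ vanishes trivially, so the relation holds regardless), and the two relations should be \emph{added} rather than subtracted to cancel $[\pi(u_1),w_1]$, but neither point affects the validity of the argument.
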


\begin{proof}
Because the metric $\tilde{F}_\epsilon$ is real analytic, we only need to
prove the (FP) condition for $\mathbf{P}$. Assume conversely it is
not true, i.e. for any nonzero $y\in \mathbf{P}$, $K^{\tilde{F}_\epsilon}(x,y,\mathbf{P})\leq 0$. We can find a nonzero
$w'\in\mathbf{P}$ which is $F$-orthogonal to $w$.
Then there are nonzero vectors $v'$ and $v''$ in $T_xM$, such that
$$\tilde{v'}=v'+\epsilon F(v')V(x)=w'\mbox{ and }\tilde{v''}=v''+\epsilon F(v'')V(x)=-w'.$$
Since our assumption implies that $w$ be $F$-orthogonal to $V(x)$, so does $w$ to $v'$ and $v''$. By Theorem \ref{killingnavigationthm}, we have
$$K^F(x,v'\wedge w)=K^{\tilde{F}_\epsilon}(x,w',w\wedge w')=K^{\tilde{F}_\epsilon}(x,w',\mathbf{P})\leq 0$$
and
$$K^F(x,v''\wedge w)=K^{\tilde{F}_\epsilon}(x,-w',w\wedge w')=K^{\tilde{F}_\epsilon}(x,w',\mathbf{P})\leq 0.$$
Because $(M,F)$ is non-negatively curved, we have $K^F(x,v'\wedge w)=K^F(x,v''\wedge w)=0$. Denote the $\mathfrak{g}$-factors of $w'$, $v'$ and $v''$ as $w'_1$, $v'_1$ and $v''_1$ respectively, then both $v'_1=w'_1-\epsilon F(v')v_1$ and $v''_1=-w'_1-\epsilon F(v'')v_1$ commute with $w_1$. Because $\epsilon$, $F(v')$ and $F(v'')$ are positive numbers, we get $[w_1,v_1]=0$. This is a contradiction.
\end{proof}

The property of $w$ in Lemma \ref{lemma-3} can also be passed to other tangent vectors in $\mathcal{S}M$ which are sufficiently closed to $w$. To be precise,
we have the following lemma.

\begin{lemma} \label{lemma-4}
Keep all relevant assumptions and notations above. Fix a sufficiently small $\epsilon>0$. Then there exist a sufficiently small neighborhood $\mathcal{U}$ of $w$ in $\mathcal{S}M$
satisfying the following property, if a tangent plane $\mathbf{P}'\subset T_{x'}M$ has non-empty intersection with $\mathcal{U}$,
$K^{\tilde{F}_\epsilon}(x',y',\mathbf{P}')>0$ for nonzero generic $y'\in\mathbf{P}'\cap\mathcal{U}$.
\end{lemma}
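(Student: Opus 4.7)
The plan is to derive Lemma \ref{lemma-4} from Lemma \ref{lemma-3} via a real analyticity and compactness argument. Since $F$ comes from a bi-invariant inner product, $V$ is a fundamental Killing field of the $G$-action on $M$, and the navigation map $y\mapsto y+\epsilon F(y)V$ is smooth with smooth inverse, the metric $\tilde{F}_\epsilon$ is real analytic on $TM\setminus 0$, and hence so is the flag curvature $K^{\tilde{F}_\epsilon}(x',y',\mathbf{P}')$ as a function on the flag bundle. Moreover, by Theorem \ref{killingnavigationthm}, $(M,\tilde{F}_\epsilon)$ is non-negatively curved for small $\epsilon$, so $K^{\tilde{F}_\epsilon}\geq 0$ everywhere.

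I would argue by contradiction. If no such neighborhood $\mathcal{U}$ existed, then letting $\mathcal{U}_n$ be the $1/n$-ball about $w$ in $\mathcal{S}M$, one could find a base point $x_n'\in M$ and a plane $\mathbf{P}_n'\subset T_{x_n'}M$ meeting $\mathcal{U}_n$, for which the positivity locus $\{y\in\mathbf{P}_n'\cap\mathcal{U}_n:K^{\tilde{F}_\epsilon}(x_n',y,\mathbf{P}_n')>0\}$ fails to be open and dense in $\mathbf{P}_n'\cap\mathcal{U}_n$. The function $y\mapsto K^{\tilde{F}_\epsilon}(x_n',y,\mathbf{P}_n')$ is non-negative, real analytic on $\mathbf{P}_n'\setminus\{0\}$, and positively homogeneous of degree $0$, so it descends to a non-negative real analytic function on the projective circle of $\mathbf{P}_n'$. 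The failure of density forces this function to vanish on an open subset of that circle, and therefore identically on all of $\mathbf{P}_n'$.

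Picking any $y_n\in\mathbf{P}_n'\cap\mathcal{U}_n$, one has $y_n\to w$ and $x_n'\to x$. Using compactness of $M$ and of the Grassmannian bundle, I pass to a subsequence so that $\mathbf{P}_n'$ converges to a tangent plane $\mathbf{P}^*\subset T_xM$. Since $y_n\in\mathbf{P}_n'$ and $y_n\to w$, it follows that $w\in\mathbf{P}^*$. By continuity of $K^{\tilde{F}_\epsilon}$, the identity $K^{\tilde{F}_\epsilon}(x_n',\cdot,\mathbf{P}_n')\equiv 0$ passes to the limit as $K^{\tilde{F}_\epsilon}(x,\cdot,\mathbf{P}^*)\equiv 0$. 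But $w\in\mathbf{P}^*$, so Lemma \ref{lemma-3} furnishes a $y\in\mathbf{P}^*$ with $K^{\tilde{F}_\epsilon}(x,y,\mathbf{P}^*)>0$, a contradiction.

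The main obstacle is the middle step, upgrading the local failure of positivity on the small arc $\mathbf{P}_n'\cap\mathcal{U}_n$ to identical vanishing on the whole plane. This relies crucially on the real analyticity of $\tilde{F}_\epsilon$ and on the one-sided bound $K^{\tilde{F}_\epsilon}\geq 0$; for a merely smooth non-negatively curved metric, the zero set of $K$ on $\mathbf{P}_n'$ need not propagate, and the contradiction could not be obtained by taking a limit against Lemma \ref{lemma-3}.
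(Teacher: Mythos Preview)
Your proposal is correct and follows essentially the same contradiction--compactness route as the paper: produce a sequence of planes on which the flag curvature vanishes identically, pass to a limit plane through $w$, and contradict Lemma~\ref{lemma-3}. The paper's version asserts directly that $K^{\tilde{F}_\epsilon}(x_n,\cdot,\mathbf{P}_n)\equiv 0$ without spelling out why; your explicit use of real analyticity plus non-negativity to propagate the local failure on $\mathbf{P}_n'\cap\mathcal{U}_n$ to identical vanishing on all of $\mathbf{P}_n'$ fills in precisely the step the paper leaves implicit.
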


\begin{proof}
Assume conversely that there does not exist such a neighborhood $\mathcal{U}$. Then there exist a sequence of tangent planes $\mathbf{P}_n\subset T_{x_n}M$, and tangent vectors $w_n\in\mathcal{S}M\cap\mathbf{P}_n$, such that $\lim x_n=x$, $\lim w_n=w$, and $K^{\tilde{F}_\epsilon}(x_n,y,\mathbf{P}_n)=0$ for each $n$ and each nonzero $y\in\mathbf{P}_n$. Passing to a suitable subsequence, $\mathbf{P}_n$ converge to is a tangent plane $\mathbf{P}\subset T_xM$
containing $w$. Then by continuity,
$K^{\tilde{F}_\epsilon}(x,y,\mathbf{P})=0$ for each nonzero vector $y\in\mathbf{P}$. This is a contradiction to Lemma \ref{lemma-3}.
\end{proof}

Whenever we have found a neighborhood $\mathcal{U}$ of $w$ in $\mathcal{S}M$
indicated by Lemma \ref{lemma-4}, any smaller neighborhood of $w$ also satisfies the same property. Because $w$ is not contained in $\mathcal{E}^\pm$,
we can also assume $\overline{\mathcal{U}}\cap\mathcal{E}^\pm=\emptyset$.

We further require $\mathcal{U}$ to have the following presentation. Take a
sufficiently small closed neighborhood $\mathcal{B}\subset M$ of $x$, and  a smooth
local section $s(\cdot):\mathcal{B}\rightarrow \mathcal{S}M$ with $s(x)=w$. Next we choose a smooth function $r(\cdot):\mathcal{B}\rightarrow[0,+\infty)$ such
that $r\equiv 0$ on $\partial\mathcal{B}$ and $r>0$ sufficiently small inside $\mathcal{B}$.
Then
$$\mathcal{U}=\{u'\in\mathcal{S}_{x'}M,\quad x'\in\mathcal{B}, F(u'-s(x'))<r(x')\}$$
is a sufficiently small neighborhood hood $w$ in $\mathcal{S}M$. Denote
$\partial\mathcal{U}$ its boundary in $\mathcal{S}M$.
For $x'\in M$ inside $\mathcal{B}$, the intersection $\partial\mathcal{U}\cap\mathcal{S}_{x'}M$ is a co-dimension 1 sphere
$\{u'\in\mathcal{S}_{x'}M,\quad F(u'-s(x'))=r(x')\}$ in $\mathcal{S}_{x'}M$, which
is the intersection of $\mathcal{S}_{x'}M$ with some hyperplane. For other $x'$, $\partial\mathcal{U}\cap\mathcal{S}_{x'}M$ is an empty set or just a point.

To summarize, the neighborhoods $\mathcal{U}$ constructed above for all $w\in\mathcal{S}M\backslash(\mathcal{E}^+\cup\mathcal{E}^-)$ provide an open covering for $\mathcal{S}M\backslash(\mathcal{E}^+\cup\mathcal{E}^-)$. To get a finite open covering for $\mathcal{S}M$, we just need to add the following two open neighborhoods of $\mathcal{E}^\pm$,
$$\mathcal{U}^\pm=\bigcup_{x\in M}\{w\in\mathcal{S}_xM,\quad F(w-w')<{r_0}\mbox{ for some }w'\in S_xM\cap\mathcal{E}^\pm\},$$
where the fixed positive number $r_0$ is sufficiently small. Denote the open covering of $\mathcal{S}M$ as $\{\mathcal{U}^+,\mathcal{U}^-,\mathcal{U}_1, \ldots, \mathcal{U}_m\}$. In previous argument, each $\mathcal{U}_i$ is associated with the Finsler metrics $\tilde{F}_{i;\epsilon}$.

Denote $\mathcal{S}'$ the union of all boundaries $\partial\mathcal{U}^\pm$ and $\partial\mathcal{U}_i$ in $\mathcal{S}$, and for any $\delta>0$,
$$\mathcal{S}'_\delta=\bigcup_{x\in M}\{u\in\mathcal{S}_xM,\quad F(u-w)\leq\delta \mbox{ for some }w\in\mathcal{S}'\cap T_xM\}.$$
Similar to Lemma \ref{lemma-2}, we have the following
\begin{lemma} \label{lemma-5}
Keep all relevant assumptions and notations above. Then for a
sufficiently small $\delta>0$, any tangent plane must have a non-empty intersection with $\mathcal{S}M\backslash(\mathcal{S}'_\delta\cup
\overline{\mathcal{U}_0^+}\cup\overline{\mathcal{U}_0^-})$
\end{lemma}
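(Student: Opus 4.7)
The plan is to mirror the compactness/contradiction argument from Lemma \ref{lemma-2}, adapted to the sphere bundle $\mathcal{S}M$. Suppose no $\delta>0$ satisfies the conclusion. Then for every $n\in\mathbb{N}$ there exist a point $x_n\in M$ and a tangent plane $\mathbf{P}_n\subset T_{x_n}M$ whose $F$-unit circle $\mathcal{C}_n=\mathbf{P}_n\cap\mathcal{S}_{x_n}M$ is entirely contained in $\mathcal{S}'_{1/n}\cup\overline{\mathcal{U}^+}\cup\overline{\mathcal{U}^-}$. Since $M$ and the Grassmann bundle of tangent $2$-planes over $M$ are both compact, I would pass to a subsequence with $x_n\to x\in M$ and $\mathbf{P}_n\to\mathbf{P}\subset T_xM$. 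The circles $\mathcal{C}_n$ then converge in Hausdorff distance to $\mathcal{C}=\mathbf{P}\cap\mathcal{S}_xM$, and because $\overline{\mathcal{U}^\pm}$ are closed and $\bigcap_{n}\mathcal{S}'_{1/n}$ equals the closure of $\mathcal{S}'$, we obtain $\mathcal{C}\subset(\mathcal{S}'\cup\overline{\mathcal{U}^+}\cup\overline{\mathcal{U}^-})\cap T_xM$.

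The next step is to show that $\mathcal{C}$ cannot be absorbed by the two polar pieces alone. By construction $\overline{\mathcal{U}^\pm}\cap\mathcal{S}_xM$ is an $F$-closed disk of radius $r_0$ around the unit vector in $\mathcal{E}^\pm\cap T_xM$. Since $r_0$ is one fixed constant and $M$ is compact, choosing $r_0$ sufficiently small at the outset (much smaller than one quarter of the $F$-circumference of any unit circle in a fibre $\mathcal{S}_xM$) prevents these two disks from covering the whole of any big circle. Consequently $\mathcal{C}\cap(\mathcal{S}'\cap T_xM)$ must contain an arc of positive $F$-length.

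On the other hand, the construction preceding Lemma \ref{lemma-5} makes each fibre $\partial\mathcal{U}_i\cap\mathcal{S}_xM$ either empty, a single point (when $x$ lies on the boundary of the associated neighbourhood $\mathcal{B}_i$, where the radius function $r_i$ vanishes), or a codimension-one sphere obtained as the intersection of $\mathcal{S}_xM$ with a single affine hyperplane of $T_xM$; the analogous statement holds for $\partial\mathcal{U}^\pm\cap\mathcal{S}_xM$. Intersecting such a hyperplane with the $2$-plane $\mathbf{P}$ yields at most a line, which meets the circle $\mathcal{C}$ in at most two points, while the degenerate cases contribute only a single point or nothing. Summing over the finitely many members of the open cover, $\mathcal{C}\cap\mathcal{S}'$ is a finite set, contradicting the positive-length arc produced in the preceding paragraph. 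The main obstacle is therefore only the uniform calibration of $r_0$ in the second step; once that is fixed, the rest of the argument is the same hyperplane-meets-circle counting used for Lemma \ref{lemma-2}.
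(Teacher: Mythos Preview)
Your argument is correct and follows essentially the same route as the paper: a compactness/contradiction argument producing a limit big circle $\mathcal{C}$ contained in $\mathcal{S}'\cup\overline{\mathcal{U}^+}\cup\overline{\mathcal{U}^-}$, the observation that the two small polar disks cannot absorb all of $\mathcal{C}$, and the hyperplane-meets-circle count showing $\mathcal{C}\cap\mathcal{S}'$ is finite. The extra details you supply (compactness of the Grassmann bundle, the degenerate fibre cases of $\partial\mathcal{U}_i$, and the explicit uniform choice of $r_0$) are exactly the points the paper leaves implicit, and your concern about calibrating $r_0$ is easily resolved since $F$ is Riemannian and $M$ compact, so all fibre circles are isometric.
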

\begin{proof}
Assume the number $\delta$ indicated by the lemma does not
exist. Then we can find a sequence $x_n\in M$, and a sequence of
tangent planes $\mathbf{P}_n\subset T_{x_n}M$ such that the big circle $\mathcal{C}_n=\mathbf{P}_n\cap\mathcal{S}_xM\subset\mathcal{S}'_{1/n}\cup
\overline{\mathcal{U}^+}\cup\overline{\mathcal{U}^-}$. Passing to a suitable limit, we will
have $\lim x_n=x$ and $\lim\mathcal{C}_n=\mathcal{C}$ which is a big circle (i.e. the intersection between a tangent plane $\mathbf{P}\subset T_xM$ and
$\mathcal{S}_xM$) contained in
$\mathcal{S}_xM\cap(\mathcal{S}'\cup\overline{\mathcal{U}_0^+}\cup\overline{\mathcal{U}_0^-})$. Because $\mathcal{C}$ can not be contained in the two small disks $\mathcal{S}_xM\cap\overline{\mathcal{U}_0^\pm}$, so the part
of $\mathcal{C}$ contained in $\mathcal{S}'$ must have a positive length. But
$\mathcal{S}'$ is a finite union of co-dimension one spheres of small radii,
which are intersections of $\mathcal{S}_xM$ with hyperplanes. So the intersection between $\mathcal{C}$ and $\mathcal{S}'$ is a finite set. This is
a contradiction.
\end{proof}

Fixed a sufficiently small $\delta>0$ indicated by Lemma \ref{lemma-5}.
The complement $\mathcal{S}M\backslash(\mathcal{S}'_\delta\cup\overline{\mathcal{U}^+}
\cup\overline{\mathcal{U}^-})$ is a disjoint union of connected open subsets in $\mathcal{S}M$. To see it is a finite union, we first observe only finite open components of $\mathcal{S}M\backslash(\mathcal{S}'_\delta\cup\overline{\mathcal{U}^+}
\cup\overline{\mathcal{U}^-})$
intersect each $S_xM$, and then use the finite open covering technique for the compact manifold $M$.
Denote these disjoint open subsets of $\mathcal{S}$ as $\mathcal{V}_1$, $\ldots$, $\mathcal{V}_N$. Their closures
$\overline{\mathcal{V}_i}$ are disjoint as well. Each $\mathcal{V}_i$ is contained by some $\mathcal{U}_j$, which in previous discussion is associated with Finsler metrics $\tilde{F}_{j;\epsilon}$ by the Killing navigation process, we then define
$F_{i;\epsilon}=\tilde{F}_{j;\epsilon}$ associated with $\mathcal{V}_i$. If we have multiple choices for $F_{i;\epsilon}$, just choose any one.

Let the non-negative smooth functions $\mu_1$, $\ldots$, $\mu_N$ on $\mathcal{S}M$ be a partition of unit, i.e. $\sum_{i=1}^N\mu_i\equiv 1$, such that
$\mu_i|_{\mathcal{V}_j}\equiv\delta_{ij}$. They will also be viewed as positively
homogeneous functions of degree 0 on the slit tangent bundle $TM\backslash 0$.

Now we are ready to construct the Finsler metric indicated by Theorem \ref{mainthm-2}, $F_\epsilon=\sum_{i=1}^N \mu_i F_{i;\epsilon}$. When $\epsilon=0$, we have $F_0=F$. So fix any sufficiently small $\epsilon>0$,
$F_\epsilon$ satisfies positive definite condition for its Hessian, and thus
$F_\epsilon$ is a Finsler metric on $M$.

Finally we check the (FP) condition for $F_\epsilon$. Consider any tangent plane $\mathbf{P}\subset T_xM$. By lemma \ref{lemma-5},
$\mathbf{P}\cap\mathcal{V}_i\neq\emptyset$ for some $i$. On $\mathcal{V}_i$, $F_\epsilon$ coincides with
 some $F_{i;\epsilon}=\tilde{F}_{j;\epsilon}$ with $\mathcal{V}_i\subset\mathcal{U}_j$.
 By Lemma \ref{lemma-4}, for nonzero generic $y\in\mathbf{P}\cap\mathcal{V}_i$,
$$K^{F_\epsilon}(x,y,\mathbf{P})=K^{\tilde{F}_{i;\epsilon}}(x,y,\mathbf{P})>0,$$
i.e. the (FP) condition is satisfied for $(M,F_\epsilon)$.

This proves
Theorem \ref{mainthm-2} when $M$ has an $S^1$ product factor.
When $M$ does not have the $S^1$ product factor, we can simply assume
$\mathcal{E}^{\pm}=\mathcal{U}_0^\pm=\emptyset$, then the above argument also
proves Theorem \ref{mainthm-2} in this case.

\end{document}